\documentclass{amsart}
\usepackage[utf8]{inputenc}
\usepackage{amsmath}
\usepackage{amssymb}
\usepackage{caption}
\usepackage{amsthm}
\usepackage[hidelinks]{hyperref}
\usepackage{cleveref}
\crefname{lemma}{Lemma}{Lemmas}
\crefname{theorem}{Theorem}{Theorems}
\usepackage{xcolor}
\usepackage{comment}
\usepackage{graphicx}
\usepackage{tikz}
\makeatletter
\def\@settitle{\begin{center}%
  \baselineskip14\p@\relax
  \bfseries
  \uppercasenonmath\@title
  \@title
  \ifx\@subtitle\@empty\else
     \\[1ex]\uppercasenonmath\@subtitle
     \footnotesize\mdseries\@subtitle
  \fi
  \end{center}%
}
\def\subtitle#1{\gdef\@subtitle{#1}}
\def\@subtitle{}
\makeatother

\newtheorem{theorem}{Theorem}[section]

\newtheorem{lemma}[theorem]{Lemma}

\theoremstyle{remark}

\usepackage{chngcntr}
\usepackage{graphicx} 
\usepackage{float}
\counterwithout{equation}{section}
\counterwithout{theorem}{section}

\begin{document}

\title[Hyperbolic manifolds with a fixed perimeter-to-volume ratio]{Bordered hyperbolic manifolds with a fixed perimeter-to-volume ratio}

\author{Sami Douba}
\address{Mathematisches Institut der Universit\"at Bonn, Endenicher Allee 60, 53115 Bonn, Germany}
\email{douba@math.uni-bonn.de}

\author{Franco Vargas Pallete}
\address{IMPA - Instituto Nacional de Matem\'atica Pura e Aplicada, Rio de Janeiro, RJ, Brasil, 22460-320}
\email{vargas.pallete@impa.br}

\begin{abstract}
We exhibit, for any $n \geq 3$, infinitely many pairwise incommensurable complete finite-volume hyperbolic $n$-manifolds, both compact and noncompact, each with nonempty totally geodesic boundary and all sharing the same perimeter-to-volume ratio. 
\end{abstract}

\maketitle

Two Riemannian manifolds (possibly with boundary) are said to be {\em commensurable} if they share a common finite Riemannian cover. Given a complete Riemannian manifold $M$ with finite-area boundary, one defines the {\em perimeter} of $M$ to be the area of $\partial M$. For such $M$ that are moreover of finite volume, the perimeter-to-volume ratio is evidently a commensurability invariant. The goal of this note is to show that this invariant is rather crude even among hyperbolic manifolds with totally geodesic boundary.

\begin{theorem}\label{thm:main}
For every $n \geq 3$, there are infinitely many pairwise incommensurable compact (respectively, pairwise incommensurable noncompact complete finite-volume) hyperbolic $n$-manifolds with nonempty totally geodesic boundary all sharing the same perimeter-to-volume ratio.
\end{theorem}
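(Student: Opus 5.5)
Our plan is to build all the manifolds from a single hyperbolic manifold $N$ together with a family of pairwise non-isometric, pairwise incommensurable ``pieces'' glued along totally geodesic hypersurfaces. We use \emph{cutting and doubling} in the Gromov--Piatetski-Shapiro spirit. The key observation is that if $M$ is obtained from a closed (or cusped, finite-volume) hyperbolic $n$-manifold $X$ by cutting along an embedded, two-sided, totally geodesic hypersurface $\Sigma$, then $M$ has totally geodesic boundary consisting of two copies of $\Sigma$. Hence
\[
\frac{\mathrm{area}(\partial M)}{\mathrm{vol}(M)} = \frac{2\,\mathrm{area}(\Sigma)}{\mathrm{vol}(X)}.
\]
So it suffices to produce infinitely many pairs $(X_k,\Sigma_k)$ with a common ratio $\mathrm{area}(\Sigma_k)/\mathrm{vol}(X_k)$ such that the resulting cut manifolds $M_k$ are pairwise incommensurable.

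To fix the ratio, we start from one pair of incommensurable arithmetic manifolds $A$, $B$ that share an isometric totally geodesic boundary hypersurface. Concretely, we take two non-commensurable arithmetic hyperbolic $n$-manifolds of simplest type over $\mathbb{Q}$, cut along totally geodesic hypersurfaces, and arrange via finite covers (using separability of totally geodesic subgroups) that the cut pieces $A$ and $B$ have boundary components all isometric to a fixed $\Sigma$. We then form, for each $k$, a manifold $M_k$ by gluing $k$ copies of $A$ and $k$ copies of $B$ in an interbreeding pattern, leaving exactly a fixed number of boundary components per block free. For instance, each block $A\cup_\Sigma B$ contributes a prescribed number of free copies of $\Sigma$ and volume $\mathrm{vol}(A)+\mathrm{vol}(B)$. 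The perimeter is then proportional to $k$, and so is the volume, so the ratio is constant in $k$. A cleaner variant is to vary the gluing pattern while keeping the numbers of $A$- and $B$-pieces and free boundary faces proportional. Performing the construction with compact arithmetic lattices (anisotropic forms over a totally real field) gives the compact case, and with isotropic forms over $\mathbb{Q}$ it gives the noncompact case.

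The main obstacle is the pairwise incommensurability of the $M_k$, since naive constructions (such as covers of one another) are commensurable. We would distinguish the $M_k$ by doubling along the boundary: commensurable $M_k$, $M_l$ have commensurable doubles $DM_k$, $DM_l$, which are closed/cusped nonarithmetic hybrids. The intended route is to use the mixing pattern so that the hybrids $DM_k$ have distinct ``arithmetic pieces structure.'' In the spirit of results on commensurability of hybrids (the canonical decomposition of a nonarithmetic hybrid into maximal arithmetic pieces along totally geodesic hypersurfaces, as used by Gelander--Levit / Raimbault), a commensurability would induce a matching of these decompositions. We would therefore arrange the patterns so that some combinatorial invariant of this decomposition differs with $k$, for example the ratio of total $A$-volume to $B$-volume varying while the perimeter-to-volume ratio stays fixed. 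This requires a second free parameter. We would achieve it by using three arithmetic classes $A,B,C$ with $\Sigma$-faces, and choosing integer combinations $(a_k,b_k,c_k)$ lying on the hyperplane where the linear functionals ``free faces'' and ``volume'' remain proportional, while the volume ratios vary. The remaining delicate point, which we expect to be the hardest, is justifying that the commensurability class of a hybrid determines these relative volume proportions of its arithmetic pieces up to scaling.
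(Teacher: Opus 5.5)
There is a genuine gap, and it is in the invariant you chose rather than in the step you flag as hardest. In your setup every boundary face is a copy of $\Sigma$. So two of your manifolds have perimeters $F_1\,\mathrm{area}(\Sigma)$ and $F_2\,\mathrm{area}(\Sigma)$, and volumes $V_i=\sum_c a_{i,c}v_c$, where $v_c$ is the volume of the block of class $c$ and $a_{i,c}\in\mathbb{Z}_{\ge 0}$. Equal perimeter-to-volume ratio means $F_1V_2=F_2V_1$, i.e.
\[
\sum_c \bigl(F_1a_{2,c}-F_2a_{1,c}\bigr)\,v_c=0 .
\]
If the $v_c$ are linearly independent over $\mathbb{Q}$, this forces $(a_{2,c})_c=\frac{F_2}{F_1}(a_{1,c})_c$. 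The relative volumes of the arithmetic pieces are then \emph{automatically} equal, however many classes you use and however you glue. So your ``hyperplane'' has a second free integer direction only if the block volumes satisfy a nontrivial rational relation. Gauss--Bonnet supplies such a relation in even dimensions. In odd dimensions none is available (rational independence is what one expects), so passing to three classes does not rescue the argument for all $n\ge 3$.

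By contrast, the point you call hardest is immediate from Raimbault's argument. Under a common finite cover, the preimages of the pieces of a given arithmetic class coincide. Hence those pieces are themselves commensurable, and any volume or area proportion attached to them is an invariant.

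The missing idea is that two classes already suffice, with the second parameter coming from the gluing pattern: the area of the interface between the two arithmetic pieces relative to their volume. The paper proceeds as follows.
\begin{itemize}
\item It cuts $M_i(\mathfrak{m})$ along two lifts of $\Sigma_i(\mathfrak{n})$ that jointly fail to separate (Lubotzky), giving a block $M_i$ with two red and two green faces.
\item Iterated doubling along a single green face yields $N_j^i$, made of $2^{j-1}$ copies of $M_i$, with $2^j$ red faces but still only two green ones.
\item $N_j$ is obtained by gluing $N_j^1$ to $N_j^2$ along the green faces.
\end{itemize}
The total ratio is constant in $j$. Consistent with the obstruction above, so is $\mathrm{vol}(N_j^1)/\mathrm{vol}(N_j^2)$. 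But the perimeter-to-volume ratio of $N_j^1$, whose boundary includes the two interface faces, strictly decreases in $j$, so Raimbault's lemma separates the $N_j$. Your ``cleaner variant'' was essentially this construction; you needed to read off the interface area rather than the volume split.

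Two smaller points. First, gluings within a class must be compatible (e.g.\ done by doubling) so that the doubled class piece remains arithmetic. Second, for $n=3$ the Zariski-density step in Raimbault's lemma needs compact $\Sigma$. That is why the paper's cusped $3$-dimensional case pairs an anisotropic form with an isotropic one, rather than simply using two isotropic forms as your proposal does.
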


Note that if $M$ is a compact Riemannian manifold with boundary, then $\partial M$ is compact and hence automatically of finite area. That the area of $\partial M$ remains finite in the case that  $M$ is a complete finite-volume hyperbolic manifold of dimension $\geq 3$ with totally geodesic boundary follows from a recurrence theorem of Dani~\cite{zbMATH03887957}.

Using elementary hyperbolic geometry, one easily produces uncountably many geometrically distinct complete finite-volume hyperbolic surfaces, both compact and noncompact, each with nonempty compact totally geodesic boundary and all sharing the same perimeter-to-volume ratio. On the other hand, by Mostow--Prasad rigidity~\cite{mostow1968quasi, prasad1973strong}, there are, up to isometry, only countably many complete finite-volume hyperbolic manifolds of dimension $\geq 3$ with totally geodesic boundary.

The manifolds we exhibit in the proof of Theorem~\ref{thm:main} are obtained by gluing together pieces of incommensurable arithmetic hyperbolic manifolds in the style of Gromov and Piatetski-Shapiro~\cite{MR932135}. It was observed by Raimbault~\cite{MR3090707} that the resulting hybrids are sensitive, even up to commensurability, to the pattern in which the subarithmetic building blocks are glued.

Given a complete hyperbolic manifold $M$ with totally geodesic boundary, denote by $2M$ the double of $M$ along $\partial M$. The following argument is due to Raimbault~\cite[Lem.~3.2]{MR3090707}; see also~\cite[Lem.~3.5]{MR3261631}. We sketch it for the convenience of the reader.

\begin{lemma}\label{lem:raimbault}
Let $n \geq 3$, and let $N_1$ and $N_2$ be complete finite-volume hyperbolic $n$-manifolds with totally geodesic boundary. Suppose each of the $N_j$ admits a decomposition into two submanifolds $N_j^1$ and $N_j^2$ with nonempty interior and totally geodesic boundary such that
\begin{itemize}
\item the double $2N_j^i$ is arithmetic for $i,j \in \{1,2\}$;
\item the doubles $2N_1^1$ and $2N_2^1$ are commensurable; and
\item the doubles $2N_j^1$ and $2N_j^2$ are incommensurable for $j=1,2$.
\end{itemize}
In the case $n=3$, suppose further that $\partial N_1^1$ and $\partial N_2^1$ are compact. If $N_1$ and $N_2$ are commensurable, then so are $N_1^1$ and $N_2^1$. 
\end{lemma}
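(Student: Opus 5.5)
Suppose $N_1$ and $N_2$ are commensurable, so there is a finite-volume hyperbolic manifold $W$ with totally geodesic boundary and finite Riemannian covers $p_j\colon W \to N_j$. We first pass to doubles: $2W$ is a common finite cover of $2N_1$ and $2N_2$, since a local isometry preserving boundaries extends across the doubling. Pull back the decompositions. The preimage $p_j^{-1}(\partial N_j^1 \cap \partial N_j^2)$ is a union of totally geodesic hypersurfaces $\Sigma_j$ in $W$, and $W \setminus \Sigma_j$ is partitioned into pieces covering $N_j^1$ and pieces covering $N_j^2$. The aim is to show that $\Sigma_1 = \Sigma_2$ and that the pieces covering $N_1^1$ coincide with those covering $N_2^1$. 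Once this is done, a component $P$ of $p_1^{-1}(N_1^1)$ is also a component of $p_2^{-1}(N_2^1)$. Hence $P$ is a common finite cover of $N_1^1$ and $N_2^1$, which gives the conclusion.

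The key step compares the two partitions. Take a component $A$ of $p_1^{-1}(\operatorname{int} N_1^1)$ and a component $B$ of $p_2^{-1}(\operatorname{int} N_2^2)$, and suppose $A \cap B$ has nonempty interior. Choose a small ball $U \subset A \cap B$, and work in $2W$ or in the universal cover $\mathbb{H}^n$. The fundamental groups of $2N_1^1$ and $2N_2^2$ give arithmetic lattices $\Gamma, \Gamma' < \mathrm{Isom}(\mathbb{H}^n)$. Through the developing maps, both lattices have fundamental domains whose interiors contain the common open set $\widetilde U$. We then invoke the standard fact that an arithmetic lattice is determined up to commensurability by any nonempty open piece of its quotient. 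Concretely, the commensurator of an arithmetic lattice is dense (Margulis), so analytic continuation inside $A \cap B$ is not needed. More directly, we use the argument of Gromov--Piatetski-Shapiro and Raimbault: the totally geodesic hypersurfaces through $U$ that project to closed or finite-volume immersed hypersurfaces are dense in both quotients and are detected locally. This forces the commensurability class of the ambient arithmetic group to agree, via the Zariski closure of the stabilizers, i.e.\ the invariant trace field and quaternion/quadratic-form data. So $2N_1^1$ and $2N_2^2$ would be commensurable. But $2N_2^2$ is incommensurable with $2N_2^1$, which is commensurable with $2N_1^1$, a contradiction. Hence $A \cap B$ has empty interior whenever the labels differ.

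Since the pieces cover $W$ up to measure zero, every component of $p_1^{-1}(N_1^1)$ is then contained in the closure of $p_2^{-1}(N_2^1)$, and symmetrically. A piece $A$ can meet several components of $p_2^{-1}(N_2^1)$ only across parts of $\Sigma_2$ lying in the interior of $A$. But every point of $\Sigma_2$ is adjacent to a piece covering $N_2^2$, which by the key step would meet $A$ in an open set. So $\Sigma_2 \cap \operatorname{int} A = \emptyset$, and $A$ is exactly a component of $p_2^{-1}(N_2^1)$.

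The main obstacle is making the local rigidity statement precise: two arithmetic lattices whose quotients contain isometric open sets must be commensurable. Here we need to go through totally geodesic hypersurfaces, i.e.\ the density of $\mathbb{Q}$-rational subspaces (or of closed geodesics) in arithmetic manifolds, together with the fact that the sublattices they determine pin down the quadratic form up to similarity. In dimension $3$ the arithmetic groups come from quaternion algebras rather than all being of simplest type. There we would use the compactness of $\partial N_j^1$ to ensure that these boundary hypersurfaces are closed totally geodesic surfaces. The arithmetic data of the Fuchsian group they carry then determines the quaternion algebra and hence the commensurability class, following Raimbault's Lemma~3.2.
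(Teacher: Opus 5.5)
There is a genuine gap in your key step. Your overall architecture matches the paper: pull both decompositions back to a common cover $W$, rule out open overlaps between a piece over $N_1^1$ and a piece over $N_2^2$, and conclude $p_1^{-1}(N_1^1)=p_2^{-1}(N_2^1)$. Your final bookkeeping paragraph is also fine. The problem is the principle you invoke, that an arithmetic lattice is determined up to commensurability by any nonempty open piece of its quotient. It is false: every hyperbolic $n$-manifold contains an isometric copy of every sufficiently small ball, so $U$ carries no arithmetic information, and density of the commensurator is irrelevant. Whether a hypersurface through $U$ closes up in one quotient is a global property and says nothing about the other quotient.

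Your fallback through totally geodesic hypersurfaces also fails, and the paper's own construction refutes it. The forms $f_1$ and $f_2$ agree on $\{x_1=0\}$, so $\Sigma_1(\mathfrak{n})$ and $\Sigma_2(\mathfrak{n})$ are isometric compact totally geodesic hypersurfaces of the incommensurable arithmetic manifolds $M_1(\mathfrak{n})$ and $M_2(\mathfrak{n})$. This includes $n=3$, where they are closed arithmetic surfaces. Hypersurface groups lie in the stabilizer of a hyperplane, so they are not Zariski dense in $\mathrm{Isom}(\mathbb{H}^n)$. They cannot recover the quadratic form up to similarity, nor, for $n=3$, the quaternion algebra of the ambient Kleinian group. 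This non-rigidity is exactly what makes Gromov--Piatetski-Shapiro gluing possible.

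The missing idea is to use the fundamental group of the overlap rather than its local geometry. Let $C$ be a component of $p_1^{-1}(N_1^1)\cap p_2^{-1}(N_2^2)$ with nonempty interior.
\begin{itemize}
\item $C$ is an intersection of regions with totally geodesic boundary, so it is a locally convex finite-volume hyperbolic manifold with corners.
\item Hence $\pi_1(C)$ injects, compatibly up to conjugacy in $\mathrm{Isom}(\mathbb{H}^n)$, into both arithmetic lattices $\pi_1(2p_1^{-1}(N_1^1))$ and $\pi_1(2p_2^{-1}(N_2^2))$.
\item Such a group is Zariski dense in $\mathrm{Isom}(\mathbb{H}^n)$ for $n\geq 4$. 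For $n=3$ this holds when $\partial C$ is compact, which is true because $\partial C\subset p_1^{-1}(\partial N_1^1)\cup p_2^{-1}(\partial N_2^1)$. That is the actual role of the compactness hypothesis in dimension $3$, not the production of closed Fuchsian surfaces.
\item The commensurability class of an arithmetic lattice (its field of definition and algebraic group) is already determined by any Zariski-dense subgroup. So the two doubles would be commensurable, giving the contradiction you wanted.
\end{itemize}
With this replacement for your key step, the rest of your argument goes through as in the paper.
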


\begin{proof}[Proof~of~Lemma~\ref{lem:raimbault}]
Suppose there is a common finite cover $N$ of the $N_j$ with covering maps $p_j: N \rightarrow N_j$. Then $p_j^{-1}(N_j^1) \subset p_k^{-1}(N_k^1)$ for $j \neq k$, since otherwise at least one of the components of the common submanifold $p_j^{-1}(N_j^1) \cap p_k^{-1}(N_k^2)$ of the two incommensurable arithmetic manifolds $2p_j^{-1}(N_j^1)$ and $2p_k^{-1}(N_k^2)$ would have Zariski-dense\footnote{The fundamental group of a complete finite-volume hyperbolic manifold of dimension $n\geq 4$ with (codimension-$2$) corners is Zariski-dense in $\mathrm{Isom}(\mathbb{H}^n)$. Zariski-density also holds for $n=3$ in the case of compact boundary.} fundamental group, and this is impossible since the commensurability class of an arithmetic lattice in $\mathrm{Isom}(\mathbb{H}^n)$ is already detected by any Zariski-dense subgroup. Thus, we have that $p_1^{-1}(N_1^1) = p_2^{-1}(N_2^1)$ is a common finite cover of the~$N_j^1$. 
\end{proof}

\begin{proof}[Proof~of~Theorem~\ref{thm:main}]
We first handle the compact case. Fix $n \geq 3$. Set $K= \mathbb{Q}(\sqrt{2})$, and let $\mathcal{O}_K$ be the ring of integers of $K$ (in this case, $\mathcal{O}_K = \mathbb{Z}[\sqrt{2}]$). Let $f_1$ and $f_2$ be the quadratic forms in $n+1$ variables with coefficients in $K$ given by
\begin{alignat*}{1}
x_1^2 + x_2^2 + \ldots + x_n^2 - \sqrt{2}x_{n+1}^2,& \\
17x_1^2 + x_2^2 + \ldots + x_n^2 - \sqrt{2}x_{n+1}^2,
\end{alignat*}
respectively. We choose these forms because Gelander and Levit \cite[\S4.2]{MR3261631} have shown that $f_1$ and $f_2$ are not similar over $K$. For $i=1,2$, denote by $\mathbb{H}_{f_i}$ the $f_i$-hyperboloid model of $\mathbb{H}^n$, and by $\mathrm{O}'_{f_i}(\mathbb{R}) = \mathrm{Isom}(\mathbb{H}_{f_i})$ the index-$2$ subgroup of~$\mathrm{O}_{f_i}(\mathbb{R})$ preserving~$\mathbb{H}_{f_i}$. Denote by $H_i \subset \mathbb{H}_{f_i}$ the geodesic hyperplane $\{x_1 = 0\}$. For a nonzero ideal $\mathfrak{n} \subset \mathcal{O}_K$, denote by $\Gamma_i(\mathfrak{n})$ the principal congruence subgroup of level~$\mathfrak{n}$ in $\Gamma_i := \mathrm{O}'_{f_i}(\mathcal{O}_K)$, and by $\Sigma_i(\mathfrak{n})$ the projection of $H_i$ to the hyperbolic $n$-orbifold $M_i(\mathfrak{n}):= \Gamma_i(\mathfrak{n})\backslash \mathbb{H}_{f_i}$. Note that, since we have chosen forms $f_1, f_2$ with the same coefficients for $x_2, \ldots, x_{n+1}$, the immersed compact totally geodesic hypersurfaces $\Sigma_1(\mathfrak{n})$ and $\Sigma_2(\mathfrak{n})$ of $M_1(\mathfrak{n})$ and $M_2(\mathfrak{n})$, respectively, are isometric for any choice of nonzero ideal $\mathfrak{n} \subset \mathcal{O}_K$. Now choose such an ideal~$\mathfrak{n}$ such that, for $i=1,2$, the group $\Gamma_i(\mathfrak{n})$ is torsion-free and $\Sigma_i(\mathfrak{n})$ is a properly embedded two-sided totally geodesic hypersurface in the manifold $M_i(\mathfrak{n})$. By an argument of Lubotzky~\cite{MR1390750} (see also~\cite{arXiv:2506.23994}), there is then a smaller nonzero ideal $\mathfrak{m} \subset \mathfrak{n}$ such that the union of two lifts~$\Sigma_i$ and $\Sigma_i'$ of $\Sigma_i(\mathfrak{n})$ to $M_i(\mathfrak{m})$ fails to separate~$M_i(\mathfrak{m})$. Let~$M_i$ be the manifold with nonempty totally geodesic boundary obtained by cutting~$M_i(\mathfrak{m})$ along $\Sigma_i$ and~$\Sigma_i'$. Color two of the boundary components of $M_i$ red and the other two green.

\begin{figure}
    \centering

\tikzset{every picture/.style={line width=0.75pt}} 


    \caption{Schematic representation of $N_1$ and $N_2$, respectively.}
    \label{fig:N1N2}
\end{figure}

We now define a sequence $(N_j)_{j \geq 1}$ of hyperbolic $n$-manifolds with entirely red boundary as follows. For $i=1,2$ and $j \geq 1$, let $N_j^i$ be a (connected) manifold with precisely two green boundary components obtained by doubling $M_i$ along a single green boundary component, then doubling the resulting manifold along a single green boundary component, and so on, until the resulting manifold consists of $2^{j-1}$ copies of $M_i$. Then the double $2N_j^i$ is commensurable to $M_i(\mathfrak{m})$. In particular, we have that $2N_j^1$ and $2N_j^2$ are incommensurable arithmetic manifolds since the forms $f_1$ and $f_2$ are not similar over $K$; see \cite[\S2.6]{MR932135}. Now set $N_j$ to be any manifold with entirely red boundary obtained by gluing the $N_j^i$ along the green boundary components via isometries (see Figure~\ref{fig:N1N2}). Then $\mathrm{area}(\partial N_j) = 2^{j+1}\mathrm{area}(\Sigma_1)$ and $\mathrm{vol}(N_j)=2^{j-1}(\mathrm{vol}(M_1)+\mathrm{vol}(M_2))$, so that the $N_j$ all share the same perimeter-to-volume ratio~$\frac{4 \cdot \mathrm{area}(\Sigma_1)}{\mathrm{vol}(M_1)+\mathrm{vol}(M_2)}$. Moreover, as the perimeter-to-volume ratio of $N^1_j$ is $(1+2^{-j})\frac{\mathrm{area}(\Sigma_1)}{\mathrm{vol}(M_1)}$, we have that for $k > j \geq 1$ the perimeter-to-volume ratio of`$N_k^1$ is strictly smaller than that of $N_j^1$, and hence $N_k$ is not commensurable to $N_j$ by Lemma~\ref{lem:raimbault}.

Note that, by the theorem of Borel and Harish-Chandra \cite{MR147566, MR141672}, the manifolds~$M_i$, and hence also the manifolds~$N_j$, obtained in the above manner are compact since the chosen forms $f_i$ were $K$-anisotropic. We next explain how to adjust the above argument to obtain cusped examples in the case $n \geq 4$. In this case, we take $K= \mathbb{Q}$ (so that $\mathcal{O}_K = \mathbb{Z}$), and instead use the forms $f_1$ and $f_2$ given by
\begin{alignat*}{1}
x_1^2 + x_2^2 + \ldots + x_n^2 - 2x_{n+1}^2,& \\
5x_1^2 + x_2^2 + \ldots + x_n^2 - 2x_{n+1}^2,
\end{alignat*}
respectively; Gelander and Levit \cite{MR3261631} have again shown that $f_1$ and $f_2$ are not similar over $K=\mathbb{Q}$. We now proceed precisely as in the compact case, but the manifolds~$M_i$ one obtains in the process, and hence also the manifolds $N_j$, have cusps since the~$f_i$ are $\mathbb{Q}$-isotropic. 

Finally, we explain how to obtain cusped examples in the case $n=3$. We again take $K=\mathbb{Q}$, but we instead take $f_1$ and $f_2$ to be the forms
\begin{alignat*}{1}
x_1^2 + x_2^2 + x_3^2 - 7x_4^2,& \\
7x_1^2 + x_2^2 + x_3^2 - 7x_4^2,
\end{alignat*}
respectively. That $f_1$ and $f_2$ are not similar over $K=\mathbb{Q}$ can be seen for instance from the fact that $f_1$ is $\mathbb{Q}$-anisotropic\footnote{Since $-7$ is a square in the $2$-adics $\mathbb{Q}_2$ by Hensel's lemma, and since the stufe of $\mathbb{Q}_2$ is $4$, the form $f_1$ is indeed $\mathbb{Q}_2$-anisotropic.} while $f_2$ is evidently not. In particular, while the resulting manifold $M_1$ will be compact in this case, so that the surfaces $\Sigma_i$ and $\Sigma_i'$ will also be compact, the manifold $M_2$ will be cusped. We may now proceed precisely as in the compact case, since Lemma~\ref{lem:raimbault} still applies to distinguish the commensurability classes of the resulting manifolds $N_j$.
\end{proof}

We conclude with some remarks. Observe that a pair of complete finite-volume manifolds with finite perimeters have the same isoperimetric ratio if and only if the ratio of their volumes agrees with the ratio of their perimeters, while commensurability moreover implies that this common ratio is rational. It is clear that the latter also holds for each family $\{N_j\}_j$ of incommensurable manifolds constructed above.

If $M$ is a complete finite-volume hyperbolic $n$-manifold with nonempty totally geodesic boundary, then
the ideal boundary of the universal cover $\widetilde{M}$ on the conformal sphere $\mathbb{S}^{n-1}$ at infinity, where $\widetilde{M}$ is viewed as a closed convex subset of~$\mathbb{H}^n$, is a {\em Schottky set}, that is, the complement in $\mathbb{S}^{n-1}$ of a union of disjoint round open balls. Two such hyperbolic manifolds are then commensurable precisely when their associated Schottky sets differ by a conformal map of $\mathbb{S}^{n-1}$. It follows from a rigidity theorem of Bonk--Kleiner--Merenkov~\cite{zbMATH05549754} that the sequences of hyperbolic manifolds sharing the same perimeter-to-volume ratio that are constructed in the proof of Theorem~\ref{thm:main} indeed have the property that their associated Schottky sets pairwise fail to be quasisymmetric (in the compact case, this already follows from earlier work of Frigerio~\cite{MR2239451}, as well as from unpublished work of Kapovich--Kleiner--Leeb--Schwartz; see the second footnote in~\cite{zbMATH01571175}). 

Besides the perimeter-to-volume ratio, another commensurability invariant of a complete finite-volume hyperbolic manifold with nonempty totally geodesic boundary is of course the Hausdorff dimension of the associated Schottky set. We however remain unaware of a single pair of incommensurable such manifolds of dimension $\geq 3$, let alone an infinite family of pairwise incommensurable such manifolds, that demonstrably fail to be distinguished by the latter invariant. We are also not aware of a complete hyperbolic manifold $M$ of dimension $\geq 3$ sharing the same perimeter-to-volume ratio with an incommensurable such manifold of the same dimension but possessing no properly immersed totally geodesic hypersurface contained entirely  in the interior of $M$. The absence of such a hypersurface within $M$ is equivalent to the absence of nonperipheral round hyperspheres within the Schottky set associated to $M$, as was recently established in full generality by Lee--Oh~\cite[Cor.~1.2]{arXiv:2511.09377} using Ratner's measure classification theorem~\cite{MR1135878} and work of Dani--Margulis~\cite{zbMATH00480583}; see also the previous works~\cite[Appendix~B]{zbMATH06786947}, \cite[\S10]{zbMATH07457815}.

It follows essentially from strong approximation that, up to diminishing the ideal~$\mathfrak{m}$ in the proof of Theorem~\ref{thm:main}, the common perimeter-to-volume ratio in Theorem~\ref{thm:main} can be made arbitrarily small; see Belolipetsky--Weinberger~\cite[Thm.~4.1]{MR4768580} (in particular the last page of the proof). One can then apply an inequality of Buser~\cite[Thm.~7.1]{zbMATH03789440} and identities of Elstrodt--Patterson--Sullivan~\cite[Thm.~2.17]{zbMATH03996555} and Sullivan~\cite{zbMATH03685853, zbMATH03903608} to see that the Hausdorff dimensions of the associated Schottky sets can be made arbitrarily close to the dimension of the ambient sphere.

On the other hand, Miyamoto \cite{MR1293303} showed that for $n=3,4$, there is a unique commensurability class of complete finite-volume hyperbolic $n$-manifolds with totally geodesic boundary maximizing the perimeter-to-volume ratio among all such manifolds. For $n=3$, the maximizers are precisely those whose associated Schottky set in $\mathbb{S}^2$ is the Apollonian gasket;  accordingly, we will call any such hyperbolic $3$-manifold with totally geodesic boundary {\em Apollonian}.

 J. Raimbault has informed us that it follows from his work~\cite{zbMATH07665059} with B. Petri that one should expect the perimeter-to-volume ratio of a generic compact hyperbolic $3$-manifold with nonempty totally geodesic boundary to be large, in the following sense. In \cite{zbMATH07665059}, Petri and Raimbault introduced a model for random compact $3$-manifolds with nonempty boundary; a manifold in their model is obtained by randomly gluing truncated tetrahedra along their hexagonal facets. Among other features, they show that, with probability tending to one as the number of tetrahedra tends to infinity, such a manifold is connected and admits a hyperbolic metric with totally geodesic boundary. Moreover, it follows from their hyperbolization argument that, given any $\epsilon >0$, the perimeter-to-volume ratio of such a manifold is asymptotically almost surely within $\epsilon$ from that of an Apollonian manifold as the number of tetrahedra tends to infinity. Indeed, Petri and Raimbault show, in a sense that they make precise, that as the number of tetrahedra tends to infinity, a random $3$-manifold in their model converges to the universal cover of an Apollonian manifold, that is, to the convex hull in $\mathbb{H}^3$ of the Apollonian gasket.

\subsection*{Acknowledgements} We are grateful to Fernando Al Assal, Mikhail Belolipetsky, Jean Raimbault, and Matthew Stover for helpful discussions. This work is based on discussions conducted while SD was in residence at the Simons Laufer Mathematical Sciences Institute in Berkeley, California (National Science Foundation No. DMS2424139); the authors are grateful to the organizers of the twin programs held there in the Spring 2026 semester, and to the institute for its hospitality and support.

\bibliography{isoperimetricbib}{} 
\bibliographystyle{siam}

\end{document}